\documentclass[11pt]{amsart} 

\usepackage{amsmath,amscd,amssymb,latexsym,color}
\usepackage{longtable}

\allowdisplaybreaks

\oddsidemargin = 0cm \evensidemargin = 0cm \textwidth = 16cm \textheight=20cm

\newcommand{\sm}{\left(\smallmatrix}
\newcommand{\esm}{\endsmallmatrix\right)}
\newcommand{\mat}{\begin{pmatrix}}
\newcommand{\emat}{\end{pmatrix}}
\renewcommand{\c}{\mathfrak{c}}

\renewcommand{\a}{\alpha}

\newcommand{\G}{\Gamma}

\newcommand{\Q}{\mathbb Q}
\newcommand{\Z}{\mathbb Z}

\newcommand{\R}{\mathbb R}

\renewcommand{\H}{\mathbb H}

\newcommand{\D}{\Delta}

\newtheorem{thm}{Theorem}
\newtheorem{lem}[thm]{Lemma}
\newtheorem{cor}[thm]{Corollary}

\theoremstyle{definition}

\newtheorem{rmk}[thm]{Remark}
\numberwithin{equation}{section}
\numberwithin{thm}{section}

\begin{document}

\title[Weierstrass points]{Weierstrass points at irregular cusps}

\author{Daeyeol Jeon}
\address{Department of Mathematics Education, Kongju National University, Kongju, 314-701, Korea}
\email{dyjeon@kongju.ac.kr}

\begin{abstract}
In this paper, we prove that all irregular cusps on $X_1(N)$ of genus $\geq2$ are Weierstrass points except for $X_1(18)$. 
Also, for any positive integer $N$ of the form $p^2M$ with a prime $p$ and a positive integer $M$, we obtain some results for when the irregular cusps of $X_0(N)$ equivalent to $\sm 1\\p\esm$ are Weierstrass points or not.
\end{abstract}
\maketitle

\renewcommand{\thefootnote}%
             {}
 {\footnotetext{
 2010 {\it Mathematics Subject Classification}: 14H55
 \par
 {\it Keywords}: Weierstrass points, irregular cusps, modular curves }}

\section{Introduction}\label{intro}
Let $X$ be a nonsingular algebraic curve of genus $g\geq 2$.
For a given point $P$ on $X$, we say that a positive integer $m$ is a {\it gap} if there is no function on $X$ that has a pole of order $m$ at $P$ and is regular elsewhere. 
The Weierstrass gap theorem says that there are exactly $g$ gaps at $P$, and these gaps $m$ satisfy $1\leq m\leq 2g-1$.
If there is an $m\leq g$ which is a non-gap (also called a {\it jump}),  then we call $P$ a {\it Weierstrass point} of $X$.
It is known that there are only finitely many Weierstrass points on $X$.
Thus except for finitely many Weierstrass points $P$, the gap sequence of $P$ is just $1,2,\dots,g$.
In general, if the gap sequence of $P$ is $a_1,a_2,\dots,a_g$, then the {\it weight} of $P$ is defined by
$${\rm wt}(P)=\sum_{i=1}^{g}(a_i-i).$$
It is known that the total weight $\displaystyle\sum_{P\in X}{\rm wt}(P)=g^3-g$.
In what follows, when we say that $P$ is a Weierstrass point of $X$, then we always assume that $X$ is of genus at least $2$.

Let $\H$ be the complex upper half plane and $\Gamma$ be a congruence subgroup of the full modular group ${\rm SL}_2(\mathbb Z)$.
We consider the modular curve $X(\Gamma)$ obtained from compactification of the quotient space $\Gamma\backslash \H$ by adding finitely many points called {\it cusps}. 
For any integer $N\geq 1,$ we have subgroups $\Gamma(N), \Gamma_1(N),
\Gamma_0(N)$ of ${\rm SL}_2(\mathbb Z)$ defined by matrices $\sm a&b\\c&d\esm$
congruent modulo $N$ to $\sm 1&0\\0&1\esm, \sm 1&*\\0&1\esm, \sm
*&*\\0&*\esm$, respectively. We let $X(N), X_1(N), X_0(N)$ be the
modular curves defined over $\mathbb Q$ associated to $\Gamma(N),
\Gamma_1(N), \Gamma_0(N)$, respectively. 

Let $\Delta$ be a subgroup of $({\mathbb Z}/{N \mathbb Z})^*$, and let
$X_\Delta(N)$ be the modular curve defined over $\mathbb Q$
associated to the congruence subgroup $\Gamma_\Delta(N):$

$$\Gamma_\Delta(N)=\left\{{\begin{pmatrix}a&b\\c&d\end{pmatrix}}
\in\Gamma\,|\,c\equiv 0\,\, {\rm mod}\,\, N, (a\,\, {\rm mod}\,\, N)\in\Delta\right\}.$$

Since the negative of the unit matrix acts as identity on the complex upper halfplane we
have $X_\Delta(N)=X_{\langle \pm1,\Delta\rangle}(N)$; hence we always assume that $-1\in\Delta.$ 
\par 
If $\Delta=\{\pm 1\}$ (resp.
$\Delta=({\mathbb Z}/{N \mathbb Z})^*$) then $X_\Delta(N)$ is
equal to $X_1(N)$ (resp. $X_0(N)$). The inclusions 
$\Gamma(N)\subseteq \pm\Gamma_1(N)\subseteq\Gamma_{\Delta}(N)
\subseteq\Gamma_0(N)$
induce natural Galois coverings 
$X(N)\to X_1(N)\to X_{\Delta}(N)\to X_0(N)$. 

Denote the genera of $X_0(N)$, $X_1(N)$ and $X_\Delta(N)$ by $g_0(N)$, $g_1(N)$ and $g_\Delta(N)$, respectively.
\par

According to Ogg \cite{O1}, the cusps on $X(N)$ can be regarded as pairs $\pm\sm x\\y\esm$ where $x,y\in\Z/N\Z$ are relatively prime and $\sm x\\y\esm$, $\sm -x\\-y\esm$ are identified, and a cusp on $X_\Delta(N)$ can be regarded as an orbit of $\Gamma_\Delta(N)/\Gamma(N)$.

We call a cusp of the form $\sm x\\N\esm$ an {\it $\infty$-cusp}, and a cusp of the form $\sm x\\y\esm$ with $\gcd(y,N)=1$ a {\it $0$-cusp}.
Also, for a cusp $s=\sm x\\y\esm$ with $d=\gcd(y,N)$, if $\gcd(d,N/d)=1$, then $s$ is called a {\it regular cusp}, and it is called an {\it irregular cusp} otherwise. Thus if $N$ is square-free, then all cusps are regular cusps.
For any two cusps $s,t$ of $X_\Delta(N)$, if there is an automorphism $\sigma$ on $X_\Delta(N)$ so that $t=\sigma(s)$, then $s,t$ are said to be ${\it equivalent}$.
Obviously equivalent cusps all behave the same with respect to being Weierstrass or not.

The Weierstrass points of $X(N)$ were studied by Petersson \cite{P} and Schoeneberg \cite{S}, and
Schoeneberg proved that all cusps of $X(N)$ are Weierstrass points for $N\geq 7$.

By using Schoeneberg's result, Lehner and Newman~\cite{LN} gave sufficient conditions for when the $\infty$-cusp is a Weierstrass point of $X_0(4M)$ and $X_0(9M)$.
We note that if the $\infty$-cusps are Weierstrass points, so are the $0$-cusps because the Fricke involution $W_N$ maps the $\infty$-cusps to the $0$-cusps.
Later, Atkin extended their results to the case $X_0(p^2M)$, where $p\geq 5$ is any prime.
On the other hand, Ogg \cite{O3} proved that the $\infty$-cusp, indeed any $\Q$-rational cusp, is not a Weierstrass point on $X_0(pM)$ if a prime $p\nmid M$ and $g_0(M)=0$.
Especially, the $\infty$-cusp is not a Weierstrass point on $X_0(p)$ for any prime $p$.
 
We can see that the tendency of a cusp to be a Weierstrass point is different in $X(N)$, $X_1(N)$ and $X_0(N)$.
For example, when $p$ is a prime, an $\infty$-cusp is always a Weierstrass point of $X(p)$, but never of $X_0(p)$,
whereas on $X_1(p)$ the cusps sometimes are Weierstrass points and sometimes not.

The study on the Weierstrass points of $X_1(N)$ was less done than for the other modular curves.
Especially, it has not been studied when irregular cusps of modular curves are Weierstrass points or not.
The author, Im and Kim \cite{IJK} used Atkin's method \cite{A} to investigate $X_\Delta(N)$, but mainly under the condition $\pm(1+pM)\in\Delta$ for some prime $p$; hence the result can not be applied to $X_1(N)$.
Recently Choi~\cite{Ch} proved that the irregular cusp $\sm 1\\ 2\esm$ is a Weierstrass point on $X_1(4p)$ for any prime $p>7$. 
In this paper, we generalize Choi's result to obtain the following:
\begin{thm}\label{main}
All irregular cusps on $X_1(N)$ are Weierstrass points except for $N=18$.
\end{thm}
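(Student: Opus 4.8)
The plan is to recast the Weierstrass property as a statement about weight-$2$ cusp forms and then, at each irregular cusp, to produce a cusp form vanishing to order larger than the genus. Holomorphic differentials on $X_1(N)$ correspond to forms in $S_2(\Gamma_1(N))$ via $\omega_f=f(\tau)\,d\tau$, and if $s$ is a cusp of width $h$ with local parameter $q_h=e^{2\pi i\tau/h}$, then a form $f=\sum_{n\ge m}a_nq_h^{\,n}$ with $a_m\neq0$ gives ${\rm ord}_s(\omega_f)=m-1$. Writing $g=g_1(N)$ and recalling that the orders of vanishing of the $g$-dimensional space of holomorphic differentials at $s$ are $0\le n_1<\cdots<n_g\le 2g-2$, the point $s$ is a Weierstrass point precisely when $n_g\ge g$, i.e.\ exactly when there is an $f\in S_2(\Gamma_1(N))$ with ${\rm ord}_s(f)\ge g+1$. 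The whole problem thus reduces to exhibiting such a high-vanishing form, and the first task is to record the exact width $h$ of each irregular cusp, since the passage between the $q_h$-order of $f$ and ${\rm ord}_s(\omega_f)$ depends on it.

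Second, I would cut down the number of cases. An irregular cusp forces $p^2\mid N$ for some prime $p$, and all irregular cusps lie ``at $p$''; using the diamond operators and the Atkin--Lehner involutions -- all automorphisms of $X_1(N)$, hence carrying Weierstrass points to Weierstrass points and irregular cusps to irregular cusps -- I would reduce to a short list of canonical representatives, say cusps equivalent to $\sm 1\\ p^e m\esm$. It then suffices to treat these representatives.

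Third comes the heart of the matter, the construction of the witnessing form. One natural route uses Schoeneberg's theorem as input: for $N\ge7$ every cusp $\tilde s$ of $X(N)$ over $s$ is a Weierstrass point, so $S_2(\Gamma(N))$ contains a form vanishing to high order at $\tilde s$; averaging over $\Gamma(N)\backslash\pm\Gamma_1(N)$ produces a $\Gamma_1(N)$-form, whose order at $s$ one tracks through the width relation $h_{\tilde s}=e\,h_s$ between $X(N)$ and $X_1(N)$. A more hands-on route moves $s$ toward $\infty$ by a suitable transformation and builds the form directly from the degeneracy maps and the $U_p,V_p$-operators attached to the prime $p$ with $p^2\mid N$, which reorganize the vanishing orders at the $p$-cusps; this is Atkin's method. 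The main obstacle in either route is the precise order bound: the averaging (or the interplay of old- and $p$-stabilized forms) can cause cancellation among the leading $q_h$-coefficients, so one must show that these terms do not pull the order below $g+1$, and must do so uniformly in $N$.

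Finally, the inequality ${\rm ord}_s(f)\ge g+1$ will fail only for finitely many small $N$ -- those where $g$ is small or the constructed form barely misses -- and these I would settle by directly computing $S_2(\Gamma_1(N))$ together with the orders of its elements at the irregular cusp. I expect this finite analysis to single out $N=18$, where $g_1(18)=2$ and the irregular cusp (at $p=3$) turns out not to be a Weierstrass point.
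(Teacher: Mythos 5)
Your reduction of the Weierstrass property to the existence of a weight-$2$ cusp form with $q_h$-order at least $g+1$ at the cusp is correct, and the use of the automorphisms $[a]$ and $W_Q$ to cut down the cusps to a short list of representatives matches what the paper does. But the heart of your argument --- actually producing the high-vanishing form --- is not carried out, and you say so yourself (``the main obstacle \ldots is the precise order bound''). Neither of your two proposed routes closes this gap. Route (a) goes in the wrong direction: Schoeneberg's concrete result gives a form on $X(N)$ vanishing to order $> g(X(N))$ at a cusp of $X(N)$, but $g(X(N))$ grows much faster than $g_1(N)$, and tracing such a form down to $\Gamma_1(N)$ by averaging over $\Gamma(N)\backslash\pm\Gamma_1(N)$ gives no lower bound on the order at the image cusp (the trace can even vanish identically). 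Route (b) is a pointer to Atkin's method, which is set up for the $\infty$-cusp of $X_0(p^2M)$; adapting it to irregular cusps of $X_1(N)$ is precisely the nontrivial content you would need to supply.

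What the paper actually uses is the \emph{abstract} form of Schoeneberg's theorem, applied downward rather than upward: an irregular cusp $\sm x\\y\esm$ with $d=\gcd(y,N)$ and $e=\gcd(d,N/d)>1$ is a fixed point of the cyclic group $\Delta_d/\{\pm1\}$ of order $e$ acting on $X_1(N)$, so the degree-$e$ quotient map $X_1(N)\to X_{\Delta_d}(N)$ is totally ramified there; lifting a function with a pole of order $\le g_{\Delta_d}(N)+1$ from the quotient gives a function on $X_1(N)$ with a single pole of order $\le e(g_{\Delta_d}(N)+1)$ at the cusp, which is a non-gap $\le g_1(N)$ once $g_1(N)-e\,g_{\Delta_d}(N)\ge e$. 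The genus formula then turns this into the explicit criterion $\varphi(d)\varphi(N/d)\ge 8+\tfrac{4}{e-1}$, which fails only for $d\in\{2,3,4,6\}$ and finitely many $N$, and those are settled individually (with $N=16$ via the hyperelliptic Weierstrass points, $N=20$ via explicit eta-quotients of pole orders $3$ and $4$, and $N=18$ emerging as the genuine exception). This fixed-point/quotient mechanism is the missing idea in your proposal; without it, or a completed substitute for your order bound, the argument does not go through.
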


We may compare this result with Schoeneberg's result in \cite{S} that all cusps of $X(N)$ are Weierstrass points for $N\geq 7$.

Also, for any positive integer $N$ of the form $p^2M$ with a prime $p$ and a positive integer $M$, we obtain some results for when the irregular cusps of $X_0(N)$ equivalent to $\sm 1\\p\esm$ are Weierstrass points or not.




\section{Preliminaries}\label{pre}

For our purpose, we need to know how to describe the cusps on $X_\D(N)$.
In virtue of \cite{O1} we have the following description of cusps. 
The quotient group $\Gamma_\D(N)/{\Gamma(N)}$ operates naturally on the left, and so a cusp of $X_\Delta(N)$ can be regarded as an orbit of $\Gamma_\D(N)/{\Gamma(N)}.$ 
For each $d\mid N,$ a cusp of $X_1(N)$ is represented by a pair $\sm x\\y\esm$ with $x$ reduced modulo $d=\gcd(y,N)$ and $(x,d)=1.$ 
Also for each $d\mid N,$ a cusp of $X_0(N)$ is represented by a pair $\sm x\\d\esm$ with $x$ reduced modulo $e=\gcd(d,N/d)$. 
From this argument, we have the result:

\begin{lem}\cite{O1}\label{lem:cusp} For each $d\mid N$, let $e=\gcd(d,N/d)$.
\begin{enumerate}
\item Suppose $g_1(N)>0$, i.e. $N=11$ or $N\geq 13$. Then for each $d\mid N$, we have $\frac12\varphi(d)\varphi(N/d)$ cusps $\sm x\\y\esm$ of $X_1(N)$ with $d=(y,N)$.
\item For each $d\mid N$, we have $\varphi(e)$ conjugate cusp $\sm x\\d\esm$ of $X_0(N)$, each with ramification index $e$ in the Galois covering $X_1(N)\to X_0(N)$, and these are all the cusps of $X_0(N)$.
In particular, a cusp is defined over $\Q$ only if $d=1$ or $2$.
\end{enumerate}
\end{lem}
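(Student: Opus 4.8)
The plan is to reconstruct Ogg's argument from the description of cusps recalled just above. Following that description, I would identify the cusps of $X(N)$ with primitive column vectors $\pm\sm x\\y\esm$ over $\Z/N\Z$ and realize the cusps of $X_\Delta(N)$ as orbits of the image of $\G_\Delta(N)$ in $\mathrm{SL}_2(\Z/N\Z)$ acting on the left. The first observation, which organizes everything, is that $d=\gcd(y,N)$ is invariant under the whole group (left multiplication by a determinant-one matrix with lower-left entry $\equiv 0 \bmod N$ cannot change $\gcd(y,N)$), so it is legitimate to count orbits one divisor $d\mid N$ at a time.

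For part (1), the group $\pm\G_1(N)/\G(N)$ is, modulo $\pm I$, generated by the unipotent matrices $\sm 1&b\\0&1\esm$. The computation $\sm 1&b\\0&1\esm\sm x\\y\esm=\sm x+by\\y\esm$ shows that the orbit of $\sm x\\y\esm$ fixes $y$ and moves $x$ through its residue class modulo $d$ (as $by$ runs over all multiples of $d$), while the ambient identification of $X(N)$ identifies $\sm x\\y\esm$ with $\sm -x\\-y\esm$. Hence an orbit is recorded by the pair $(y,\,x\bmod d)$ taken up to a simultaneous sign. There are $\varphi(N/d)$ admissible $y$ with $\gcd(y,N)=d$ and $\varphi(d)$ residues $x\bmod d$ with $\gcd(x,d)=1$, and I would then verify that the sign acts \emph{freely} on these pairs as soon as $N\ge 5$: a fixed point needs $2y\equiv 0\bmod N$ and $2x\equiv 0\bmod d$ with $\gcd(x,d)=1$, which forces $N\le 4$. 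Dividing by $2$ then yields exactly $\tfrac12\varphi(d)\varphi(N/d)$ cusps, and the hypothesis $g_1(N)>0$ lies comfortably inside this range.

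For part (2), I would pass from $X_1(N)$ to $X_0(N)$ through the Galois covering with group $\G_0(N)/\pm\G_1(N)\cong(\Z/N\Z)^*/\{\pm1\}$ of degree $\tfrac12\varphi(N)$, where $a\in(\Z/N\Z)^*$ acts by $\sm x\\y\esm\mapsto\sm ax\\a^{-1}y\esm$. Since $a$ is a unit, $d$ is preserved, and surjectivity of $(\Z/N\Z)^*\to(\Z/(N/d)\Z)^*$ lets me choose $a\equiv y'\bmod N/d$ (where $y=dy'$) to normalize $y$ to the divisor $d$ itself, which explains the representative $\sm x\\d\esm$. The technical heart is the inertia computation: an element $a$ fixes the class of $\sm x\\d\esm$ up to sign exactly when $a\equiv\pm1\bmod N/d$ and $a\equiv\pm1\bmod d$ with matching signs, i.e. $a\equiv\pm1\bmod N/e$ with $e=\gcd(d,N/d)$. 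Counting these (using that every prime dividing $e$ also divides $N/e$) gives precisely $2e$ such $a$, hence a stabilizer of order $e$ in $(\Z/N\Z)^*/\{\pm1\}$ once $N/e>2$; this is the asserted ramification index. Dividing the degree $\tfrac12\varphi(N)$ by $e$ gives the fibre size, and combining with part (1) via the identity $\varphi(N)=\varphi(d)\varphi(N/d)\,e/\varphi(e)$ produces exactly $\varphi(e)$ cusps of $X_0(N)$ for each $d$. The rationality assertion then follows by analysing the Galois action of $\mathrm{Gal}(\Q(\zeta_N)/\Q)\cong(\Z/N\Z)^*$, which permutes the $\varphi(e)$ conjugate cusps attached to a fixed $d$ in a single orbit; an individual such cusp is defined over $\Q$ only when this orbit collapses, i.e. $\varphi(e)=1$, which recovers the stated necessary condition.

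The main obstacle, in both parts, is the stabilizer/freeness bookkeeping rather than any conceptual difficulty. In (1) one must carefully exclude the spurious $-I$-fixed points, and this is exactly where the small $N$ are lost; in (2) one must pin the inertia order down to $e$ and confirm that the two sign-cases are disjoint, the only genuinely delicate input being the $\gcd(d,N/d)$ manipulations and the multiplicativity identity for $\varphi$ that reconciles the two cusp counts.
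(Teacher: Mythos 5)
Your reconstruction is correct and follows exactly the route the paper itself indicates: the paper offers no proof beyond citing Ogg and describing cusps as orbits of $\Gamma_\Delta(N)/\Gamma(N)$ with representatives $\sm x\\y\esm$, $x$ reduced modulo $d$, and your orbit/stabilizer bookkeeping (freeness of the $\pm$ identification for $N\geq 5$, stabilizer $\Delta_d/\{\pm1\}$ of order $e$ using that every prime of $e$ divides $N/e$, and the identity $\varphi(N)=\varphi(d)\varphi(N/d)\,e/\varphi(e)$) is the standard argument and is sound. One caveat concerns the final clause: your Galois computation shows a cusp $\sm x\\d\esm$ is rational precisely when $\varphi(e)=1$, i.e. $e=1$ or $2$, which is \emph{not} the printed condition ``$d=1$ or $2$'' — the two are inequivalent, since e.g.\ the $\infty$-cusp $\sm 1\\N\esm$ has $d=N$ but $e=1$ and is rational. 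So you have not ``recovered the stated necessary condition''; rather, you have proved the correct statement, and the condition in the lemma should be read with $e$ in place of $d$ (this is also the form the paper actually needs later, where for $\sm 1\\2\esm$ on $X_0(4M)$ one has $d=e=2$).
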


For $d\mid N,$ let $\pi_d$ be the natural projection from $({\mathbb Z}/{N \mathbb Z})^*$ to
$({\mathbb Z}/{{\rm lcm}(d,N/d)\mathbb Z})^*$.
From the Hurwitz formula, the following genus formula is obtained:

\begin{thm}\cite{JK}\label{Gn}
The genus of the modular curve $X_\Delta(N)$ is given by
$$g_\D(N)=1+\frac{\mu(N,\D)}{12}-\frac{\nu_2(N,\D)}{4}-\frac{\nu_3(N,\D)}3-\frac{\nu_\infty(N,\D)}2$$
where
\begin{eqnarray*}
\mu(N,\D) &=& N\cdot\prod_{p|N\atop
prime}\left(1+\frac1p\right)\cdot\frac{\varphi(N)}{|\Delta|}\\
\nu_2(N,\D) &=& \left|\{(b\mod N)\in\Delta\,|\, b^2+1\equiv 0\mod
N\}\right|\cdot\frac{\varphi(N)}{|\Delta|}\\ 
\nu_3(N,\D) &=&
\left|\{(b\mod N)\in\Delta\,|\, b^2-b+1\equiv 0\mod
N\}\right|\cdot\frac{\varphi(N)}{|\Delta|}\\ 
\nu_\infty(N,\D) &=&
\sum_{d|N}\frac{\varphi(d)\varphi(\frac Nd
)}{|\pi_d(\Delta)|}.\end{eqnarray*}
\end{thm}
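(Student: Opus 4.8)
The plan is to derive the formula from the Riemann--Hurwitz formula applied to the degree-$\mu$ covering $\pi\colon X_\D(N)\to X(1)$, where $X(1)={\rm SL}_2(\Z)\backslash\H^*$ is the $j$-line, of genus $0$. Here $\mu=\mu(N,\D)$ is the index $[\,{\rm PSL}_2(\Z):\overline{\G_\D(N)}\,]$, the bar denoting image in ${\rm PSL}_2(\Z)$ (recall $-1\in\D$, so $-I\in\G_\D(N)$). The map $\pi$ is unramified except above the elliptic points $j=1728$ and $j=0$ and the cusp $j=\i$. Above $j=1728$ the fibre consists of the $\nu_2=\nu_2(N,\D)$ elliptic points of order $2$, which are unramified, together with $(\mu-\nu_2)/2$ points of ramification index $2$; above $j=0$ it consists of the $\nu_3$ elliptic points of order $3$, unramified, together with $(\mu-\nu_3)/3$ points of index $3$; and above $j=\i$ it consists of the $\nu_\infty$ cusps, whose widths $e_1,\dots,e_{\nu_\infty}$ satisfy $\sum_i e_i=\mu$. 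Feeding these into $2g_\D(N)-2=-2\mu+\sum_P(e_P-1)$ yields
$$2g_\D(N)-2=-2\mu+\frac{\mu-\nu_2}{2}+\frac{2(\mu-\nu_3)}{3}+(\mu-\nu_\infty),$$
which rearranges to the stated formula.

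It then remains to establish the four explicit expressions. For $\mu$ I would factor the covering as $X_\D(N)\to X_0(N)\to X(1)$. The second map has degree $[\,{\rm PSL}_2(\Z):\overline{\G_0(N)}\,]=N\prod_{p\mid N}(1+1/p)$. For the first, the homomorphism $\G_0(N)\to(\Z/N\Z)^*$ sending $\sm a&b\\c&d\esm$ to $a\bmod N$ is surjective with kernel $\G_1(N)$ and carries $\G_\D(N)$ onto $\D$; hence its degree is $[(\Z/N\Z)^*:\D]=\varphi(N)/|\D|$. Multiplying gives $\mu(N,\D)$.

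For $\nu_2$ and $\nu_3$ I would count, up to $\G_\D(N)$-conjugacy, the finite-order elements of $\overline{\G_\D(N)}$ fixing a point of $\H$, each elliptic point contributing exactly one conjugacy class of stabilizer generator. An order-$2$ generator is a trace-$0$ matrix $\sm a&b\\c&-a\esm$ with $a^2+bc=-1$; the condition $c\equiv0\pmod N$ forces $a^2\equiv-1\pmod N$, and membership in $\G_\D(N)$ forces $(a\bmod N)\in\D$. Thus the admissible residues are exactly the $b\in\D$ with $b^2+1\equiv0\pmod N$, and bookkeeping of how each $\G_0(N)$-class splits into $\varphi(N)/|\D|$ classes over $\G_\D(N)$ produces the factor in $\nu_2(N,\D)$. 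The order-$3$ count is identical with trace $1$ in place of trace $0$, leading to $b^2-b+1\equiv0\pmod N$. As consistency checks one recovers the classical elliptic-point counts of $X_0(N)$ (take $\D=(\Z/N\Z)^*$) and the vanishing of $\nu_2,\nu_3$ on $X_1(N)$ for $N\geq4$ (take $\D=\{\pm1\}$).

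The last and most delicate step is $\nu_\infty$, which must be computed directly from the action of $\G_\D(N)$ on $\PP^1(\Q)$ rather than through Riemann--Hurwitz. Using the description of cusps recalled before Lemma~\ref{lem:cusp}, I would group the cusps $\sm x\\y\esm$ according to $d=\gcd(y,N)$; for fixed $d$ the relevant data is a pair of units modulo $d$ and modulo $N/d$, on which the diamond action of $\D$ operates through its image $\pi_d(\D)$ in $(\Z/{\rm lcm}(d,N/d)\Z)^*$. Counting orbits gives $\varphi(d)\varphi(N/d)/|\pi_d(\D)|$ cusps of denominator $d$, and summing over $d\mid N$ yields $\nu_\infty(N,\D)$. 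I expect this cusp count to be the main obstacle: one must pin down the stabilizer of each cusp inside $\G_\D(N)$, verify that the $\D$-action factors precisely through $\pi_d$, and reconcile the identification $\sm x\\y\esm\sim\sm -x\\-y\esm$ with the hypothesis $-1\in\D$ so as to avoid over- or under-counting. Once all four quantities are in hand, substitution into the displayed master formula completes the proof.
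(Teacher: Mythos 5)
Your derivation is correct and follows exactly the route the paper itself indicates: the statement is quoted from [JK] with the remark that it follows ``from the Hurwitz formula,'' and the cited proof is precisely the Riemann--Hurwitz computation over $X(1)$ combined with the index, elliptic-point, and cusp counts you describe (your arithmetic $2g-2=-2\mu+\tfrac{\mu-\nu_2}{2}+\tfrac{2(\mu-\nu_3)}{3}+\mu-\nu_\infty$ checks out). The one place you flag as delicate, the cusp count, is indeed where the work lies, and your orbit-counting plan matches the paper's own later use of the fact that the $\tfrac12\varphi(d)\varphi(N/d)$ cusps of $X_1(N)$ of denominator $d$ fall into orbits of size $|\pi_d(\Delta)|/2$ under $\Delta/\{\pm1\}$, giving $\varphi(d)\varphi(N/d)/|\pi_d(\Delta)|$ cusps on $X_\Delta(N)$.
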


For an integer $a$ prime to $N,$ let $[a]$ denote the automorphism of $X_\Delta(N)$ represented by $\gamma\in\Gamma_0(N)$ such that $\gamma\equiv\sm a&*\\0&*\esm\mod N.$ 
Sometimes we regard $[a]$ as a matrix.

For each exact divisor $Q\|N$, i.e. $(Q,N/Q)=1,$ consider the matrices of the form $\begin{pmatrix} Qx & y\\Nz & Qw \end{pmatrix}$ with $x,y,z,w\in\mathbb Z$ and determinant $Q.$ 
Then these matrices define a unique involution of $X_0(N)$, which is called the {\it Atkin-Lehner involution} and denoted by $W_Q$. 
In particular, if $Q=N,$ then $W_N$ is called the {\it Fricke involution.}
We also denote by $W_Q$ a matrix of the above form.
$W_Q$ also normalizes $\Gamma_1(N)$, but the automorphism it induces on $X_1(N)$ is not necessarily an involution. 
Also, $W_Q$ does not necessarily normalize $\Gamma_\Delta(N)$.

Kim and Koo \cite{KK} proved that the normalizer $N(\Gamma_1(N))$ of $\Gamma_1(N)$ in $PSL_2(\mathbb R)$ is generated by the matrices $[a]$ and $W_Q$ except for $N=4$.
Any element of $N(\Gamma_1(N))$ defines an automorphism of $X_1(N)$, we can regarded the quotient group $N(\Gamma_1(N))/\pm\Gamma_1(N)$ as a subgroup of the automorphism group ${\rm Aut}(X_1(N))$ of $X_1(N)$.
Indeed, when $N\geq 4$, the automorphism group ${\rm Aut}(Y_1(N))$ is isomorphic to $N(\Gamma_1(N))/\pm\Gamma_1(N)$ where $Y_1(N)$ is the affine modular curve corresponding to $\Gamma_1(N)$.

All regular cusps of $X_0(N)$ are on the same orbit under the automorphism group, actually already under the action of the Atkin-Lehner involutions, and hence all are Weierstrass or none. 
The same holds for the regular cusps of $X_1(N)$ under the action of automorphims $[a]$ and $W_Q$.
However, some $W_Q$ may not normalize $\Gamma_\Delta(N)$, so it can happen that not all regular cusps of $X_\Delta(N)$ are on the same orbit under the automorphism group.


 


The following result is Schoeneberg's Theorem \cite{S} which plays a central role in this paper. 
For the sake of completeness we present the very short proof.

\begin{thm}\label{central} \cite{S} Suppose that $X$ is a non-singular algebraic curve of $g\geq 2$.
Let $\sigma$ be an automorphism of order $m$ with a fixed point $P$, and $\bar{g}$ denote the genus of the quotient curve $X/\langle \sigma\rangle$.
If $g-m\bar{g}\geq m$, then $P$ is a Weierstrass point.
\end{thm}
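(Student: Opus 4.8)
The plan is to translate the Weierstrass condition into the language of holomorphic differentials and then exploit the eigenspace decomposition of $\sigma$. Recall that $P$ is a Weierstrass point precisely when the $g$ vanishing orders at $P$ of an adapted basis of $V:=H^0(X,\Omega_X)$ are \emph{not} the consecutive integers $0,1,\dots,g-1$; equivalently, when some nonzero holomorphic differential vanishes to order $\geq g$ at $P$. So I would argue by contradiction, assuming the vanishing orders are exactly $\{0,1,\dots,g-1\}$ and deriving a conflict with the hypothesis $g-m\bar g\geq m$.

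First I would set up the quotient map $\pi\colon X\to Y:=X/\langle\sigma\rangle$ of degree $m$, where $Y$ has genus $\bar g$. Since $\sigma$ fixes $P$, and a finite-order automorphism is linearizable near its fixed point, $\sigma$ acts on a local coordinate $t$ at $P$ by $t\mapsto\zeta^a t$ with $\zeta=e^{2\pi i/m}$; because no proper power of $\sigma$ can act trivially on the tangent space without being the identity, $\zeta^a$ is a primitive $m$-th root of unity and $P$ is totally ramified. Next I would use that $\sigma(P)=P$ forces $\sigma^*$ to preserve the vanishing-order filtration $V_k=\{\omega:\operatorname{ord}_P\omega\geq k\}$ of $V$. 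As $\sigma^*$ has finite order it is semisimple, so I can choose a basis of eigendifferentials adapted to this filtration, giving $g$ eigendifferentials with distinct vanishing orders $k_1<\dots<k_g$.

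The heart of the argument is a local computation: if an eigendifferential has eigenvalue $\zeta^{j}$ and vanishes to order $k$ at $P$, then comparing leading terms in $\sigma^*\omega=\zeta^{j}\omega$ gives $a(k+1)\equiv j\pmod m$. In particular, the $\sigma$-invariant differentials (those with $j=0$) are exactly those whose vanishing order satisfies $k\equiv -1\pmod m$. Since the invariant differentials are precisely the pullbacks $\pi^*H^0(Y,\Omega_Y)$, there are exactly $\bar g$ of the $k_i$ lying in the residue class $-1\bmod m$. I expect this bookkeeping — pinning down the residue of each vanishing order via the eigenvalue and the local linearization — to be the main technical step.

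To finish, I would count: under the contradiction hypothesis $\{k_1,\dots,k_g\}=\{0,1,\dots,g-1\}$, the number of these integers congruent to $-1\bmod m$ is exactly $\lfloor g/m\rfloor$, so $\bar g=\lfloor g/m\rfloor$. But $g-m\bar g\geq m$ rearranges to $g/m\geq\bar g+1$, hence $\lfloor g/m\rfloor\geq\bar g+1>\bar g$, contradicting $\bar g=\lfloor g/m\rfloor$. Therefore the vanishing orders cannot be $\{0,\dots,g-1\}$, and $P$ is a Weierstrass point.
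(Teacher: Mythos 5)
Your proof is correct, but it takes a genuinely different route from the paper. The paper's own argument is three lines on the \emph{function} side: by Riemann--Roch the image point $\bar{P}$ on $X/\langle\sigma\rangle$ has a non-gap $\leq \bar{g}+1$; since $\sigma$ fixes $P$, the fibre over $\bar{P}$ is the single, totally ramified point $P$ with ramification index $m$, so the pullback of such a function has a single pole at $P$ of order $\leq m(\bar{g}+1)\leq g$, i.e.\ $P$ has a non-gap $\leq g$ and is Weierstrass. You instead work on the \emph{differential} side, essentially reproducing Lewittes' original method: linearize $\sigma$ at $P$, diagonalize $\sigma^*$ on $H^0(X,\Omega_X)$ compatibly with the order-of-vanishing filtration, extract the congruence $a(k+1)\equiv j\pmod{m}$ from the leading terms, identify the invariant differentials with $\pi^*H^0(Y,\Omega_Y)$ to conclude that exactly $\bar{g}$ of the $g$ vanishing orders are $\equiv -1\pmod{m}$, and compare with $\lfloor g/m\rfloor$. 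All steps check out; in particular you correctly justify the primitivity of $\zeta^a$ (faithfulness of the tangent-space representation), which is what lets you pass from $a(k+1)\equiv 0$ to $k\equiv -1\pmod{m}$, and the final count of integers in $\{0,\dots,g-1\}$ congruent to $-1$ mod $m$ is right. The trade-off: the paper's pullback-of-functions argument is far shorter and uses nothing beyond Riemann--Roch on the quotient, whereas your eigenvalue bookkeeping is heavier but yields strictly more --- it pins down the residue class mod $m$ of every gap at $P$, hence gives lower bounds on the weight of $P$ and leads naturally to the fixed-point refinement of Lewittes quoted as the corollary immediately after this theorem.
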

\begin{proof}
Let $\bar{P}$ be the corresponding point of $P$ on $X/\langle\sigma\rangle$.
On $X/\langle\sigma\rangle$ there is a function with a single pole of order at most $\bar{g}+1$ at $\bar{P}$.
This lifts to a function on $X$ with a single pole at $P$ of order at most $m(\bar{g}+1)$.
 As soon as this number is smaller or equal $g$, $P$ is a Weierstrass point.
\end{proof}



From Theorem \ref{central} and the Hurwitz formula we get the following result which is known as Lewittes' Theorem.
\begin{cor}\label{cor2}
\cite{L} The assumptions and notations are the same as in Theorem \ref{central}.
If $\sigma$ has more than 4 fixed points, then $P$ is a Weierstrass point.
\end{cor}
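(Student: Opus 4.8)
The plan is to deduce the corollary from Theorem~\ref{central} by feeding it a lower bound for $g-m\bar g$ obtained from the Riemann--Hurwitz (Hurwitz) formula for the quotient map $\pi\colon X\to X/\langle\sigma\rangle$, a Galois cover of degree $m$. The first step is the structural observation that, because $\langle\sigma\rangle$ is cyclic of order $m$, a point is fixed by $\sigma$ precisely when its stabilizer in $\langle\sigma\rangle$ is all of $\langle\sigma\rangle$; equivalently, the fixed points of $\sigma$ are exactly the totally ramified points of $\pi$, each with ramification index $m$ and hence each contributing $m-1$ to the ramification term.

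Next I would write Riemann--Hurwitz as
$$2g-2=m(2\bar g-2)+\sum_{Q\in X}(e_Q-1),$$
and, writing $k$ for the number of fixed points of $\sigma$, discard the nonnegative contributions of any non-fixed ramification points to obtain
$$2g-2\ \geq\ m(2\bar g-2)+k(m-1).$$
Rearranging gives the workable inequality
$$g-m\bar g\ \geq\ \frac{(k-2)(m-1)}{2}.$$

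It then remains to check that $k\geq 5$ forces this lower bound to be at least $m$, so that the hypothesis $g-m\bar g\geq m$ of Theorem~\ref{central} is met and $P$ is a Weierstrass point. For $m\geq 3$ this is immediate, since $\tfrac{(k-2)(m-1)}{2}\geq\tfrac{3(m-1)}{2}$ and $\tfrac{3(m-1)}{2}\geq m$ whenever $m\geq 3$. The one point I expect to require slightly more care is the involution case $m=2$, where the displayed bound only yields $g-2\bar g\geq\tfrac{3}{2}$; here I would appeal to integrality of $g-2\bar g$ (equivalently, to the fact that an involution has an even number of fixed points, so $k\geq 6$) to conclude $g-2\bar g\geq 2=m$. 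Finally I would remark that the threshold is sharp: four fixed points only give $g-m\bar g\geq m-1$, one short of what Theorem~\ref{central} needs, which is exactly why the hypothesis reads ``more than $4$''.
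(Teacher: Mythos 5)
Your argument is correct and is exactly the route the paper indicates: it derives the corollary by combining Theorem~\ref{central} with the Hurwitz formula applied to the degree-$m$ cyclic cover $X\to X/\langle\sigma\rangle$, whose totally ramified points are precisely the fixed points of $\sigma$. Your extra care in the involution case $m=2$ (via integrality of $g-2\bar g$, or the parity of the fixed-point count) is a genuine detail the paper leaves implicit, and your computation $(k-2)(m-1)/2\geq m$ for $k\geq 5$ is right.
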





\section{Weierstrass points of $X_1(N)$ at irregular cusps}\label{sec3}


From now on, we always assume that $N$ is not square-free, $d\mid N$ with $\gcd(d,N/d)>1$, $$e=\gcd(d,N/d),$$ and 
$$\Delta_d=\left\{a\in (\Z/N\Z)^*\,|\, a\equiv\pm 1\pmod{N/e}\right\}.$$
Each $[a]$ acts on $\sm x\\y\esm$ with $d=\gcd(y,N)$ as $\sm ax\\a^{-1}y\esm$; hence $[a]\sm x\\y\esm=\sm x\\y\esm$ on $X_1(N)$ if and only if  $a\equiv\pm1\pmod{d}$ and $a\equiv\pm1\pmod{N/d}$, i.e.  $a\in\D_d$.
Note that all irregular cusps are represented by $\sm x\\ y\esm$ with $d=\gcd(y,N)$ for some $d$.

Concerning the irregular cusps, we have the following result:

\begin{lem}\label{lem:tot} The map $X_1(N)\to X_{\D_d}(N)$ is of degree $e$ and it is totally ramified at each irregular cusp $\sm x\\ y\esm$ with $d=\gcd(y,N)$.
\end{lem}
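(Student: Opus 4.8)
The plan is to realize $X_1(N)\to X_{\D_d}(N)$ as a Galois covering and to read off both its degree and the local behaviour at $s=\sm x\\y\esm$ from the group acting as deck transformations. Since $\{\pm1\}\subseteq\D_d$, this covering is Galois with deck group $G=\D_d/\{\pm1\}$ acting through the diamond operators $[a]$, $a\in\D_d$. For a Galois covering of curves the degree equals $|G|$, the ramification index at a point equals the order of its stabilizer in $G$, and the fibre over the image of that point has $|G|$ divided by that order many points. Thus both assertions reduce to two group-theoretic facts: (a) $|G|=e$, giving the degree; and (b) the stabilizer of $s$ in $G$ is all of $G$, which forces a single point in the fibre with ramification index $|G|=e$, i.e. total ramification.

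For (a) I would count $\D_d$ directly. Reduction modulo $N/e$ gives a surjection $(\Z/N\Z)^*\to(\Z/(N/e)\Z)^*$ whose kernel $\{a\equiv 1\pmod{N/e}\}$ has order $\varphi(N)/\varphi(N/e)$; the set $\{a\equiv-1\pmod{N/e}\}$ is a second coset of the same size, and the two are disjoint because $N/e=\operatorname{lcm}(d,N/d)>2$ for every relevant $N$ (one checks $N/e\le 2$ only in the genus-$0$ case $N=4$). Hence $|\D_d|=2\,\varphi(N)/\varphi(N/e)$ and $|G|=\varphi(N)/\varphi(N/e)$. It then remains to see that $\varphi(N)=e\,\varphi(N/e)$, and this is where the hypothesis $e=\gcd(d,N/d)$ enters: if a prime $p$ divides $e$ then $p\mid d$ and $p\mid N/d$, so $v_p(N/e)=\max(v_p(d),v_p(N/d))\ge 1$ and thus $p\mid N/e$. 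Therefore $\operatorname{rad}(e)\mid N/e$, and the standard identity $\varphi(me)=e\,\varphi(m)$ for $\operatorname{rad}(e)\mid m$, applied with $m=N/e$, yields $\varphi(N)=e\,\varphi(N/e)$, so $|G|=e$.

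For (b) I would use the stabilizer computation recorded just before the lemma. Writing the identification on $X_1(N)$ as an overall sign attached to the pair $\sm x\\y\esm$, the equation $[a]\sm x\\y\esm=\sm ax\\a^{-1}y\esm=\sm x\\y\esm$ splits into the $+$ case $a\equiv1\pmod{N/d}$ and $a\equiv1\pmod d$, and the $-$ case $a\equiv-1\pmod{N/d}$ and $a\equiv-1\pmod d$, where the same sign must be taken in both congruences since it comes from the single sign on the pair. As $\operatorname{lcm}(d,N/d)=N/e$, each case amounts to $a\equiv\pm1\pmod{N/e}$, so the stabilizer of $s$ is exactly $\D_d$, i.e. all of $G$. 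Combined with (a) this gives total ramification of index $e$ at $s$.

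The step I expect to be most delicate is the sign bookkeeping in (b): one must insist that the sign is a single sign on the pair and not two independent signs on the entries, for otherwise (precisely when $e=2$) the mixed-sign possibilities $a\equiv1\pmod d$, $a\equiv-1\pmod{N/d}$ would enlarge the apparent stabilizer beyond $\D_d$ and break the claim. Checking that such mixed solutions cannot arise as genuine stabilizing diamonds is the crux of the argument; by contrast the degree identity $\varphi(N)=e\,\varphi(N/e)$ is a routine consequence of $\operatorname{rad}(e)\mid N/e$.
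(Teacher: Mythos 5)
Your proposal is correct and follows essentially the same route as the paper: the paper's two-line proof asserts $[\Gamma_{\D_d}(N):\pm\Gamma_1(N)]=e$ to get the degree and invokes the stabilizer computation (every $[a]$ with $a\in\D_d$ fixes the cusp) for total ramification, exactly as you do. The only difference is that you spell out the index count via $|\D_d|=2\varphi(N)/\varphi(N/e)$ and the identity $\varphi(N)=e\,\varphi(N/e)$, and you make explicit the single-sign subtlety in the stabilizer condition --- details the paper leaves implicit.
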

\begin{proof} Since $[\Gamma_{\D_d}(N):\pm\Gamma_1(N)]=e$, the map $X_1(N)\to X_{\D_d}(N)$ is of degree $e$.
Also for any $a\in\D_d$, $[a]$ fixes $\sm x\\y \esm$ with $d=\gcd(y,N)$; hence the result follows.
\end{proof}

We can view $H=\Delta_d/\{\pm 1\}$ as a subgroup of ${\rm Aut}(X_1(N))$.
Since all elements $\sigma\in H$ fix a cusp, $H$ should be cyclic.
By Theorem \ref{central}, we have
 
\begin{lem}\label{genus}
If $g_1(N)-eg_{\D_d}(N)\geq e$, then the cusps $\sm x\\ y\esm$ with $d=\gcd(y,N)$ are Weierstrass points on $X_1(N)$.
\end{lem}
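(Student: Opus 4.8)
The plan is to read this off as a direct consequence of Schoeneberg's Theorem~\ref{central}, applied to the cyclic group $H=\D_d/\{\pm1\}$ acting on $X=X_1(N)$. First I would choose a generator $\sigma$ of $H$. By Lemma~\ref{lem:tot} the covering $X_1(N)\to X_{\D_d}(N)$ has degree $e$, and this covering is exactly the quotient by the action of $H$ (the diamond operators $[a]$ with $a\in\D_d$, taken modulo $\pm1$, are its deck transformations); hence $|H|=e$, and since $H$ is cyclic its generator $\sigma$ has order exactly $m=e$. Moreover, the computation preceding Lemma~\ref{lem:tot} shows that each $[a]$ with $a\in\D_d$ fixes the cusp $P=\sm x\\y\esm$ with $d=\gcd(y,N)$, so in particular $\sigma$ fixes $P$.

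Next I would identify the quotient curve and its genus: $X_1(N)/\langle\sigma\rangle=X_1(N)/H=X_{\D_d}(N)$, so $\bar g=g_{\D_d}(N)$. Substituting $m=e$ and $\bar g=g_{\D_d}(N)$, the hypothesis $g-m\bar g\geq m$ of Theorem~\ref{central} becomes precisely $g_1(N)-e\,g_{\D_d}(N)\geq e$, which is the assumption of the lemma. Theorem~\ref{central} then gives that $P$ is a Weierstrass point.

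The one hypothesis of Theorem~\ref{central} not appearing explicitly is $g\geq2$, and I would dispatch it in a line: because the cusp is irregular we have $e=\gcd(d,N/d)>1$, so $e\geq2$, and the assumption forces $g_1(N)\geq e\bigl(g_{\D_d}(N)+1\bigr)\geq e\geq2$. Thus there is no genuine obstacle in the argument itself; the real work has already been carried out in the preceding setup, where $H$ is shown to be cyclic, to realize the degree-$e$ quotient onto $X_{\D_d}(N)$, and to fix every irregular cusp. Once those facts are in hand, the lemma is immediate from Schoeneberg's theorem with $m=e$ and $\bar g=g_{\D_d}(N)$.
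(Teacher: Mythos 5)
Your proposal is correct and is exactly the argument the paper intends (the paper leaves it implicit, deducing the lemma directly from Theorem~\ref{central} after noting that $H=\D_d/\{\pm1\}$ is cyclic of order $e$, fixes the cusp, and has quotient $X_{\D_d}(N)$). Your added one-line check that the hypothesis forces $g_1(N)\geq 2$ is a small but welcome completion of a detail the paper glosses over.
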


We need to find the relations of the functions involved with the genus formula in Theorem \ref{Gn} when $\D=\D_d$ and $\D=\{\pm1\}$.
First, we easily see that 
$\mu(N,\{\pm 1\})-e\mu(N, \D_d)=0.$
For $N>3$, there are no elliptic points on $X_1(N)$; hence $\nu_2(N,\{\pm1\})=\nu_3(N,\{\pm1\})=0$ for $N>3$.
Since $X_1(2)$ and $X_1(3)$ are of genera 0, we do not need to consider the two cases.
Clearly we have $\nu_2(N, \D_d)\geq 0$ and $\nu_3(N, \D_d)\geq 0$.
Finally, we have the following inequality:
\begin{lem}\label{lem:cusp2}
$$e\nu_\infty(N,\D_d)-\nu_\infty(N,\{\pm 1\})\geq (e-1)\frac{\varphi(d)\varphi(N/d)}{2}.$$
\end{lem}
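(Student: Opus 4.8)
The plan is to expand both sides using the explicit formula for $\nu_\infty$ from Theorem \ref{Gn} and collapse everything into a single divisor sum. Writing $L_{d'}={\rm lcm}(d',N/d')$ for the modulus of $\pi_{d'}$, and noting that $|\pi_{d'}(\{\pm1\})|=2$ for every $d'\mid N$ occurring here (so that $\nu_\infty(N,\{\pm1\})=\sum_{d'\mid N}\frac12\varphi(d')\varphi(N/d')$, in agreement with Lemma \ref{lem:cusp}(1)), the difference becomes
$$e\,\nu_\infty(N,\D_d)-\nu_\infty(N,\{\pm1\})=\sum_{d'\mid N}\varphi(d')\varphi(N/d')\left(\frac{e}{|\pi_{d'}(\D_d)|}-\frac12\right).$$
Thus the whole statement is reduced to controlling the quantity $|\pi_{d'}(\D_d)|$ for each $d'\mid N$.

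My strategy is to establish two facts: first, that every summand is nonnegative, i.e. $|\pi_{d'}(\D_d)|\le 2e$ for all $d'\mid N$; and second, that the single term coming from $d'=d$ already accounts for the right-hand side. For the latter I would use that $\D_d=\{a\equiv\pm1\pmod{N/e}\}$ with $N/e=L_d$, so $\pi_d(\D_d)=\{\pm1\}$ inside $(\Z/L_d\Z)^*$ and hence $|\pi_d(\D_d)|\le 2$; the $d'=d$ summand is then at least $\varphi(d)\varphi(N/d)\left(\frac e2-\frac12\right)=(e-1)\frac{\varphi(d)\varphi(N/d)}{2}$. Granting nonnegativity of every summand, discarding all terms except $d'=d$ yields exactly the claimed bound.

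The crux is the uniform estimate $|\pi_{d'}(\D_d)|\le 2e$. Here I would write $\D_d=\D_1\cup(-\D_1)$, where $\D_1=\{a\in(\Z/N\Z)^*\,|\,a\equiv1\pmod{L_d}\}$ is the kernel of the reduction $(\Z/N\Z)^*\to(\Z/L_d\Z)^*$; since $\pi_{d'}$ is a homomorphism and $-1\in\D_d$, this gives $|\pi_{d'}(\D_d)|\le 2\,|\pi_{d'}(\D_1)|$, so it suffices to prove $|\pi_{d'}(\D_1)|\le e$. A counting argument with kernels and images identifies $|\pi_{d'}(\D_1)|=\varphi({\rm lcm}(L_d,L_{d'}))/\varphi(L_d)$: indeed $|\D_1|=\varphi(N)/\varphi(L_d)$, while $\D_1\cap\ker\pi_{d'}$ is the kernel of reduction modulo ${\rm lcm}(L_d,L_{d'})$ and so has order $\varphi(N)/\varphi({\rm lcm}(L_d,L_{d'}))$. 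Finally, from $L_d\mid {\rm lcm}(L_d,L_{d'})\mid N=eL_d$ I can write ${\rm lcm}(L_d,L_{d'})=mL_d$ with $m\mid e$, and the elementary inequality $\varphi(mn)\le m\,\varphi(n)$ gives $\varphi(mL_d)/\varphi(L_d)\le m\le e$, as required.

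I expect the main obstacle to be the index identity $|\pi_{d'}(\D_1)|=\varphi({\rm lcm}(L_d,L_{d'}))/\varphi(L_d)$: one must verify that the reduction maps $(\Z/N\Z)^*\to(\Z/L\Z)^*$ are surjective (which holds for $L\mid N$) and correctly recognize $\D_1\cap\ker\pi_{d'}$ as the reduction kernel modulo ${\rm lcm}(L_d,L_{d'})$. The remaining ingredients — the bound $\varphi(mn)\le m\,\varphi(n)$ and the divisibility $m\mid e$ — are routine.
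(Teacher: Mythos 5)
Your proof is correct, and the overall skeleton is the same as the paper's: write the difference as $\sum_{d'\mid N}\varphi(d')\varphi(N/d')\bigl(\tfrac{e}{|\pi_{d'}(\D_d)|}-\tfrac12\bigr)$, show every summand is nonnegative, and extract the claimed bound from the single term $d'=d$ using $|\pi_d(\D_d)|=2$. Where you genuinely diverge is in how the nonnegativity, i.e.\ the bound $|\pi_{d'}(\D_d)|\le 2e$, is established. The paper gets it geometrically: citing the proof of the genus formula in [JK], the $\tfrac12\varphi(d')\varphi(N/d')$ cusps of $X_1(N)$ with invariant $d'$ lie over $\tfrac{\varphi(d')\varphi(N/d')}{|\pi_{d'}(\D_d)|}$ cusps of $X_{\D_d}(N)$, and a degree-$e$ covering has at most $e$ points over any point, which is exactly the required inequality. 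You instead prove the same bound by pure group theory: $\D_d=\D_1\cup(-\D_1)$ with $\D_1=\ker\bigl((\Z/N\Z)^*\to(\Z/L_d\Z)^*\bigr)$, the index computation $|\pi_{d'}(\D_1)|=\varphi({\rm lcm}(L_d,L_{d'}))/\varphi(L_d)$, and the elementary estimates $\varphi(mn)\le m\,\varphi(n)$ and $m\mid e$. Your route is more self-contained (it needs no appeal to the covering-space interpretation of the cusp count and in fact reproves the relevant piece of it), at the cost of being longer; the paper's route is shorter but leans on an external fact from [JK]. All the steps you flag as potential obstacles (surjectivity of the reduction maps for $L\mid N$, the identification of $\D_1\cap\ker\pi_{d'}$ as the reduction kernel modulo ${\rm lcm}(L_d,L_{d'})$, and the divisibility ${\rm lcm}(L_d,L_{d'})=mL_d$ with $m\mid e$ coming from $L_d\mid{\rm lcm}(L_d,L_{d'})\mid N=eL_d$) do check out, so there is no gap.
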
 
\begin{proof} 
From the proof of Theorem \ref{Gn} in \cite{JK}, we know that for each $c\mid N$, there are $\frac{\varphi(c)\varphi(\frac{N}{c})}{2}$ cusps on $X_1(N)$ lying above $\frac{\varphi(c)\varphi(\frac Nc)}{|\pi_c(\D)|}$ cusps on $X_{\D}(N)$.
Since the map $X_1(N)\to X_{\D_d}(N)$ is of degree $e$, the inequality
$$e\frac{\varphi(c)\varphi(\frac Nc)}{|\pi_c(\D_d)|}\geq \frac{\varphi(c)\varphi(\frac{N}{c})}{2}$$
always holds, and $|\pi_d(\D_d)|=2$.
Thus
\begin{align*}
e\nu_\infty(N,\D_d)-\nu_\infty(N,\{\pm 1\})&=e\sum_{c|N}\frac{\varphi(c)\varphi(\frac Nc
)}{|\pi_c(\D_d)|}-\sum_{c|N}\frac{\varphi(c)\varphi(\frac Nc
)}{2}\\
&\geq (e-1)\frac{\varphi(d)\varphi(N/d)}{2}
\end{align*}
\end{proof}

By Lemma \ref{lem:cusp2} together with Lemma \ref{genus} and Theorem \ref{Gn}, we have the following result:

\begin{lem}\label{cusp} If the inequality
$$\varphi(d)\varphi(N/d)\geq 8+\frac{4}{e-1}$$
holds, then the cusps $\sm x\\ y\esm$ with $d=\gcd(y,N)$ are Weierstrass points on $X_1(N)$.
\end{lem}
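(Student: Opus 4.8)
The plan is to derive the statement directly from Lemma \ref{genus}, so that it suffices to show that the stated inequality on $\varphi(d)\varphi(N/d)$ forces $g_1(N)-eg_{\D_d}(N)\geq e$. To this end I would write out both genera through the formula of Theorem \ref{Gn}, taking $\D=\{\pm1\}$ for $g_1(N)$ and $\D=\D_d$ for $g_{\D_d}(N)$, and then assemble the combination $g_1(N)-eg_{\D_d}(N)$ term by term.

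The key point is that each of the four building blocks of the genus formula behaves favorably in this difference. The constant terms contribute $1-e$. The $\mu$-terms cancel exactly, since $\mu(N,\{\pm1\})-e\mu(N,\D_d)=0$ as already noted. For the elliptic contributions, $\nu_2(N,\{\pm1\})=\nu_3(N,\{\pm1\})=0$ while $\nu_2(N,\D_d),\nu_3(N,\D_d)\geq 0$; because these enter the genus formula with a minus sign, after multiplying the $\D_d$-genus by $-e$ they reappear with a positive sign and may simply be discarded to obtain a lower bound. This yields
\begin{equation*}
g_1(N)-eg_{\D_d}(N)\geq (1-e)+\frac12\bigl(e\nu_\infty(N,\D_d)-\nu_\infty(N,\{\pm1\})\bigr).
\end{equation*}

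At this stage I would invoke Lemma \ref{lem:cusp2} to bound the remaining cusp term below by $(e-1)\varphi(d)\varphi(N/d)/2$, which gives
\begin{equation*}
g_1(N)-eg_{\D_d}(N)\geq (e-1)\left(\frac{\varphi(d)\varphi(N/d)}{4}-1\right).
\end{equation*}
The final step is purely algebraic: since $e>1$, this lower bound is at least $e$ precisely when $\varphi(d)\varphi(N/d)/4\geq 1+e/(e-1)=(2e-1)/(e-1)$, i.e. when $\varphi(d)\varphi(N/d)\geq 8+4/(e-1)$, which is exactly the hypothesis; the conclusion then follows from Lemma \ref{genus}. I do not anticipate any genuine obstacle here. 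The only point demanding care is the bookkeeping of signs when forming $g_1(N)-eg_{\D_d}(N)$: one must check that the elliptic terms indeed point in the favorable direction so that they can be dropped, leaving the cusp estimate of Lemma \ref{lem:cusp2} as the only nontrivial input.
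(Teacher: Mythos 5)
Your proposal is correct and is exactly the argument the paper intends: the paper states this lemma with no written proof beyond the remark that it follows from Lemma \ref{lem:cusp2}, Lemma \ref{genus} and Theorem \ref{Gn}, and your term-by-term assembly of $g_1(N)-eg_{\D_d}(N)$ (cancelling $\mu$, discarding the nonnegative elliptic contributions, and applying the cusp estimate) is the straightforward computation being suppressed. The sign bookkeeping and the final algebraic reduction to $\varphi(d)\varphi(N/d)\geq 8+4/(e-1)$ both check out.
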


For a cusp $s=\sm x\\y\esm$ with $d=\gcd(y,N)$, $W_Ns=\sm -y/d\\xN/d\esm$; hence $W_N$ maps $s$ to a cusp $\sm x'\\y'\esm$ with $\gcd(y',N)=N/d$.
If $s$ is a Weierstrass point, then so is $W_Ns$.
Thus it suffices to consider $d$ such that $\varphi(d)\leq \varphi(N/d)$; hence if $\varphi(d)^2\geq 8+\frac{4}{e-1},$
then the cusps $\sm x\\ y\esm$ with $d=\gcd(y,N)$ are Weierstrass points on $X_1(N)$.
Therefore Theorem \ref{main} holds for $d\neq 2,3,4,6$.

First, consider the case when $d=2$.
Under the condition $g_1(N)\geq 2$, applying Lemma \ref{cusp}, we have that all cusps $\sm x\\ y\esm$ with $\gcd(y,N)=2$ of $X_1(N)$ are Weierstrass points except for $N=16,20,24,28,32,36,40,44,48,60$.
By Lemma \ref{genus}, we have the same conclusion for all remaining numbers except for $N=16,20$.

The author \cite{J} computed all Weierstrass points on the hyperelliptic curves $X_1(N)$, i.e. $N=13,16,18$, and he found that all irregular cusps of $X_1(16)$ are Weierstrass points.

Yang \cite{Y} computed generators for the function field of $X_1(N)$ by using the generalized Dedekind eta functions defined as follows:
$$E_r(\tau)=q^{NB(r/N)/2}\prod_{m=1}^\infty(1-q^{(m-1)N+r})(1-q^{mN-r}),$$
where $r$ is an integer not congruent to $0$ modulo $N$, $B(x)=x^2-x+1/6$ and $q=e^{2\pi i\tau}$.
By using the method explained in \cite{Y}, one can find that the two functions
\begin{equation}\label{fg}
f=\frac{E_2E_4^2E_6^2}{E_1^2E_8E_9^2}\hbox{ and } g=\frac{E_3E_4^2E_5E_6E_7}{E_1^2E_8^2E_9E_{10}}
\end{equation}
have poles of order 3 and 4 at the cusp $s=\sm1\\10\esm$, respectively and are regular elsewhere.
Thus the gap sequence of $s$ is $1,2,5$, and hence $s$ is a Weierstrass point of weight $2$ on $X_1(20)$.
There are 4 irregular cusps, namely $s$, $\sm3\\10\esm$, $\sm1\\2\esm$ and  $\sm 1\\6\esm$.
Since $W_{4}s=\sm3\\10\esm$, $W_{20}s=\sm1\\2\esm$ and $W_5s=\sm 1\\6\esm$, all irregular cusps of $X_1(20)$ are Weierstrass points of weight 2.

Second, consider the case when $d=3$.
Under the condition $g_1(N)\geq 2$, applying Lemma \ref{cusp}, we see that all cusps $\sm x\\ y\esm$ with $\gcd(y,N)=3$ of $X_1(N)$ are Weierstrass points except for $N=18,36$.
Lemma \ref{genus} takes care of $N=36$.
However, according to the computation in \cite[Theorem 3.4]{J}, all Weierstrass points of $X_1(18)$ are non-cusps.
Indeed, $X_1(18)$ is the only curve whose irregular cusps are not Weierstrass points among the curves $X_1(N)$ of $g_1(N)\geq 2$.

Third, consider the case when $d=4$.
Under the condition $g_1(N)\geq 2$, applying Lemma \ref{cusp}, we have that all cusps $\sm x\\ y\esm$ with $\gcd(y,N)=4$ of $X_1(N)$ are Weierstrass points except for $N=16,32,48$.
We treated $X_1(16)$ already above, and Lemma \ref{genus} confirms our claim for $N=32,48$.

Finally, consider the case when $d=6$.
Under the condition $g_1(N)\geq 2$, applying Lemma \ref{cusp}, we have that all cusps $\sm x\\ y\esm$ with $\gcd(y,N)=6$ of $X_1(N)$ are Weierstrass points except for $N=36,72$, for which we again use Lemma \ref{genus}.

Summarizing all results as above, we obtain Theorem \ref{main}.

\section{Weierstrass points of $X_0(N)$ at irregular cusps}

Throughout this section, we assume that $p$ is a prime and $N=p^2M$ for some integer $M$.
In this section, we give some results on when the irregular cusps of $X_0(p^2M)$ equivalent to $\sm 1\\p\esm$ are Weierstrass points.
First, we begin by a simple lemma.

\begin{lem}\cite{A}\label{rep}
$\Gamma_0(p^2M)$ is of index $p$ in $\Gamma_0(pM)$, and a set of right representatives for $\Gamma_0(p^2M)$ in $\Gamma_0(pM)$ is given by $\tau'=\tau/(kpM\tau+1)$ for $k=0,1,\dots,p-1$.
\end{lem}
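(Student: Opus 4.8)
The plan is to identify the fractional linear transformations $\t\mapsto\t/(kpM\t+1)$ with explicit matrices and then reduce the statement to a routine index computation together with a disjointness check. First I would observe that $\t\mapsto\t/(kpM\t+1)$ is induced by the matrix $\g_k=\sm 1&0\\kpM&1\esm$, and that each $\g_k$ lies in $\Gamma_0(pM)$ since its lower-left entry $kpM$ is divisible by $pM$; on the other hand $\g_k\in\Gamma_0(p^2M)$ precisely when $p\mid k$. This already isolates the arithmetic heart of the matter: membership in the smaller group is governed by $k$ modulo $p$.

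Next I would compute the index $[\Gamma_0(pM):\Gamma_0(p^2M)]$ using the standard formula $[\mathrm{SL}_2(\Z):\Gamma_0(n)]=n\prod_{q\mid n}(1+1/q)$, the product being taken over the distinct primes $q$ dividing $n$. The key observation is that $pM$ and $p^2M$ share exactly the same set of prime divisors, whether or not $p\mid M$, so the two Euler-type products coincide and the ratio of the two indices in $\mathrm{SL}_2(\Z)$ is simply $p^2M/(pM)=p$. Hence $[\Gamma_0(pM):\Gamma_0(p^2M)]=p$.

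It then remains to verify that the $p$ matrices $\g_0,\dots,\g_{p-1}$ represent $p$ distinct right cosets. For this I would compute $\g_j\g_k^{-1}=\sm 1&0\\(j-k)pM&1\esm$ and note that this lies in $\Gamma_0(p^2M)$ if and only if $p^2M\mid(j-k)pM$, i.e. if and only if $p\mid(j-k)$. Since $0\le j,k\le p-1$, this forces $j=k$, so the right cosets $\Gamma_0(p^2M)\g_k$ are pairwise distinct (the same computation with $\g_k^{-1}\g_j$ settles the left-coset convention identically). As the number of these cosets equals the index computed above, they exhaust all of $\Gamma_0(pM)$, which completes the proof.

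There is no serious obstacle here; the argument is entirely elementary. The only point requiring a little care is the index computation, where one must confirm that $[\Gamma_0(pM):\Gamma_0(p^2M)]=p$ holds uniformly in both cases $p\mid M$ and $p\nmid M$. This is precisely the step handled by the prime-divisor observation above, which makes the seemingly $M$-dependent product cancel and leaves only the clean factor $p$.
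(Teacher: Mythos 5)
Your proof is correct. The paper itself gives no argument for this lemma (it is simply quoted from Atkin \cite{A}), and your verification --- identifying $\tau\mapsto\tau/(kpM\tau+1)$ with $\sm 1&0\\kpM&1\esm$, computing $[\Gamma_0(pM):\Gamma_0(p^2M)]=p$ from the standard index formula after observing that $pM$ and $p^2M$ have the same prime divisors, and checking coset-disjointness via $\gamma_j\gamma_k^{-1}=\sm 1&0\\(j-k)pM&1\esm$ --- is exactly the standard argument one would supply, valid uniformly in the cases $p\mid M$ and $p\nmid M$.
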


First we consider the case when $p\mid M$.

\begin{lem}\label{tot} Suppose $p$ is a prime and $p|M$. Then the map $X_0(p^2M)\to X_0(pM)$ is of degree $p$ and it is totally ramified at each irregular cusp $\sm x\\ p\esm$ with $x=1,2,\dots,p-1$.
\end{lem}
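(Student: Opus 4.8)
The plan is to prove the two assertions in turn. The degree statement is immediate from Lemma \ref{rep}: it gives $[\Gamma_0(pM):\Gamma_0(p^2M)]=p$, and since both groups contain $-I$ this index equals the index of their images in ${\rm PSL}_2(\Z)$, which is precisely the degree of the induced covering $X_0(p^2M)\to X_0(pM)$. So the map has degree $p$.

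For the ramification I would compute the width of the cusp $s=\sm x\\ p\esm$, regarded as $x/p\in\PP^1(\Q)$ with $\gcd(x,p)=1$, with respect to each of the two groups, and use that the ramification index of a cusp in such a covering equals the index of its stabilizer in $\Gamma_0(p^2M)$ inside its stabilizer in $\Gamma_0(pM)$ (equivalently, the ratio of the two widths). Concretely, fixing $\sigma=\sm x&*\\ p&*\esm\in{\rm SL}_2(\Z)$ with $\sigma\infty=x/p$, the conjugate $\sigma\sm1&n\\0&1\esm\sigma^{-1}$ has lower-left entry $-np^2$, so it lies in $\Gamma_0(pM)$ (resp. $\Gamma_0(p^2M)$) exactly when $pM\mid np^2$ (resp. $p^2M\mid np^2$); the smallest positive $n$ in each case is the width.

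This is exactly where the hypothesis $p\mid M$ is used, and it is the crux of the argument. Writing $M=pM_0$, the condition $pM\mid np^2$ becomes $M_0\mid n$ while $p^2M\mid np^2$ becomes $M\mid n$, so the widths are $M/p$ and $M$ respectively and the ramification index at $s$ is $M/(M/p)=p$. Since this equals the degree of the map, the fiber over the image of $s$ is the single point $s$; that is, the map is totally ramified at $\sm x\\ p\esm$ for each $x=1,\dots,p-1$. As a consistency check one can note, via Lemma \ref{lem:cusp}, that there are $\varphi(p)=p-1$ such cusps both on $X_0(p^2M)$ and on $X_0(pM)$ (using $\gcd(p,M)=p$), and these $p-1$ totally ramified points account for the full degree over the $p-1$ cusps downstairs. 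The only real work is the gcd bookkeeping above; the point to emphasize is that $p\mid M$ is precisely what forces $\gcd(p^2,pM)=p^2$ rather than $p$, hence nontrivial ramification, whereas for $p\nmid M$ the same computation would leave the cusp unramified.
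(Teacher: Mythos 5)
Your proof is correct, but it takes a different primary route from the paper. The paper's argument is essentially your ``consistency check'': by Lemma \ref{lem:cusp}(2), when $p\mid M$ the number of cusps $\sm x\\p\esm$ (i.e.\ with $d=p$, $e=\gcd(p,pM)=p$) is $\varphi(p)=p-1$ on \emph{both} $X_0(p^2M)$ and $X_0(pM)$, so the degree-$p$ map must be totally ramified at each of them; the paper supplements this by noting that the $p$ coset representatives of Lemma \ref{rep} all send $x/p$ to the same cusp of $X_0(p^2M)$. You instead compute the ramification index directly as the ratio of cusp widths, conjugating $\sm 1&n\\0&1\esm$ by a scaling matrix for $x/p$ to get lower-left entry $-np^2$, and reading off widths $M/p$ and $M$ for $\Gamma_0(pM)$ and $\Gamma_0(p^2M)$ respectively. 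Your computation is accurate, and it has two advantages: it isolates exactly where $p\mid M$ enters (forcing $\gcd(p^2,pM)=p^2$), and it makes transparent why the cusp is \emph{unramified} when $p\nmid M$, which the counting argument does not directly show. The paper's route is shorter given that Lemma \ref{lem:cusp} is already on the table. Both are complete proofs.
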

\begin{proof} The result follows from the fact that the number of cusps on $X_0(p^2M)$ corresponding to the divisor $p$ is the same as for $X_0(pM)$ by Lemma \ref{lem:cusp} when $p|M$. Indeed, it is easy to check that all right representatives in Lemma \ref{rep} define the same cusp on $X_0(p^2M)$ for $\tau=\sm x\\ p\esm$.
\end{proof}

By Lemma \ref{tot} and the exact same proof of Theorem \ref{central}, we have the following result:
\begin{lem}\label{irr-wp} Suppose $p$ is a prime and $p|M$. If $g_0(p^2M)-pg_0(pM)\geq p$, then irregular cusps $\sm x\\ p\esm$ of $X_0(p^2M)$ with $x=1,2,\dots,p-1$ are Weierstrass points.
\end{lem}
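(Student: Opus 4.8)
The plan is to follow the proof of Theorem~\ref{central} word for word, with the cyclic automorphism quotient $X\to X/\langle\sigma\rangle$ replaced by the natural covering $\phi\colon X_0(p^2M)\to X_0(pM)$ of degree $p$. The essential point is that Schoeneberg's argument never invokes the Galois or automorphism structure of the covering: it uses only that the map is totally ramified over the point in question and that pulling a function back multiplies the order of its pole by the local ramification index. Since $\phi$ is in general \emph{not} Galois (the group $\Gamma_0(p^2M)$ need not be normal in $\Gamma_0(pM)$), isolating this observation is exactly what lets the same proof go through.

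Concretely, I would write $X=X_0(p^2M)$, $g=g_0(p^2M)$, $\bar{X}=X_0(pM)$ and $\bar{g}=g_0(pM)$, and fix an irregular cusp $P=\sm x\\p\esm$ with $1\leq x\leq p-1$, setting $\bar{P}=\phi(P)$. By Lemma~\ref{tot}, $\phi$ has degree $p$ and is totally ramified at $P$, so $P$ is the only point of $X$ above $\bar{P}$ and has ramification index $p$. Note also that the hypothesis forces $g\geq p\geq 2$, so $X$ has genus at least $2$ and speaking of Weierstrass points is meaningful.

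On the base curve $\bar{X}$ of genus $\bar{g}$, the Weierstrass gap theorem --- equivalently Riemann--Roch applied to the divisor $(\bar{g}+1)\bar{P}$, for which $\ell\geq 2$ --- provides a non-constant function $h$ whose only pole lies at $\bar{P}$ and has order at most $\bar{g}+1$. Pulling $h$ back through $\phi$ and using total ramification at $P$, the function $h\circ\phi$ on $X$ has a single pole, at $P$, of order at most $p(\bar{g}+1)=p\bigl(g_0(pM)+1\bigr)$.

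Finally the hypothesis closes the argument: the inequality $g_0(p^2M)-p\,g_0(pM)\geq p$ is precisely $p\bigl(g_0(pM)+1\bigr)\leq g_0(p^2M)=g$, so $h\circ\phi$ realizes a non-gap at $P$ that is at most $g$, whence $P$ is a Weierstrass point. I do not anticipate a genuine difficulty; the one point requiring care is the justification, already supplied by Lemma~\ref{tot}, that the non-Galois covering is totally ramified at $P$, since this is exactly what licenses the pullback step replacing the lift used in Theorem~\ref{central}.
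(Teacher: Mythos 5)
Your proposal is correct and follows exactly the route the paper intends: the paper's proof is literally ``by Lemma~\ref{tot} and the exact same proof of Theorem~\ref{central},'' i.e.\ pull back a function with a pole of order at most $g_0(pM)+1$ through the totally ramified degree-$p$ covering. Your added observation that only total ramification (not the Galois property) is needed matches the paper's own remark that the covering is non-Galois for $p\geq 5$ and that the lemma is a special case of Limonov's generalization of Lewittes' theorem.
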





\begin{rmk}
We note that the map $X_0(p^2M)\to X_0(pM)$ is not Galois for $p\geq 5$ and  with the Hurwitz formula Lemma \ref{irr-wp} is a special case of \cite[Lemma 1]{Li} (with the correction that $P$ is a total branching point).
\end{rmk}

The condition in Lemma \ref{irr-wp} is a sufficient condition for both the $0$-cusp and an irregular cusp $\sm x\\ p\esm$ to be Weierstrass points; hence we have the same results as \cite[Theorem 1]{A} for the case when $p|M$ except for $N=64$.

By \cite[Theorem 16]{B}, $S_p=\sm 1&\frac{1}{p}\\ 0& 1\esm$ is contained in $N(\Gamma_0(p^2M))$ for $p=2,3$; hence it defines an automorphism on $X_0(p^2M)$, and  it maps the $0$-cusp to $\sm 1\\p\esm$.
Thus the $0$-cusp is a Weierstrass point if and only if so is $\sm 1\\p\esm$ for $p=2,3$.

Since the $0$-cusp is a Weierstrass point on $X_0(64)$, $\sm 1\\2\esm$ is also a Weierstrass point.
Also since the $0$-cusp is not a Weierstrass point on $X_0(81)$, $\sm 1\\3\esm$ is also not.
Thus under the assumptions $g_0(p^2M)\geq 2$ and $p|M$, the irregular cusps $\sm x\\ p\esm$ are Weierstrass points if $p\geq 5$, $p=3$ with $M\neq 9$, or $p=2$ with $M\neq 2q, 4q$ ($q$ odd prime) by \cite[Theorem 1]{A}.

If $p=2$, by Lemma \ref{lem:cusp}, $\sm 1\\ 2\esm$ is a $\Q$-rational cusp; hence we can apply \cite[Theorem]{O3}, and we conclude that $\sm 1\\ 2\esm$ is not a Weierstrass point when $p=2$ with $M= 2q, 4q$ ($q$ odd prime). We note that $g_0(8)=g_0(16)=0$.

Thus have the following:
\begin{thm}\label{main2}
Suppose $p$ is a prime, $p\mid M$ and $g_0(p^2M)\geq 2$. Then the irregular cusps equivalent to $\sm 1\\p\esm$ are Weierstrass points of $X_0(p^2M)$  if and only if
$$p^2M\neq 81,8q,16q,$$
for some odd prime $q$. 
\end{thm}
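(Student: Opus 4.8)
The plan is to prove the equivalence by a case analysis on the prime $p$, feeding the preceding lemmas together with the cited results of \cite{A}, \cite{B}, and \cite{O3} into the two directions. First I would reduce to a single cusp: by Lemma \ref{lem:cusp} the $p-1$ cusps $\sm x\\p\esm$ with $x=1,\dots,p-1$ are precisely the conjugate cusps of denominator $p$ (here $e=\gcd(p,N/p)=p$ since $p\mid M$), and Lemmas \ref{tot} and \ref{irr-wp} treat them uniformly as total branch points; hence all of them share the Weierstrass status of $\sm 1\\p\esm$, and it suffices to settle that one cusp.

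For the forward direction I would show $\sm 1\\p\esm$ is a Weierstrass point whenever $N=p^2M\notin\{81,8q,16q\}$. By \cite[Theorem 1]{A}, under $g_0(p^2M)\geq2$ and $p\mid M$ the genus inequality $g_0(p^2M)-pg_0(pM)\geq p$ of Lemma \ref{irr-wp} holds in all cases except $p=3$ with $M=9$ and $p=2$ with $M\in\{2q,4q,16\}$; in every non-exceptional case Lemma \ref{irr-wp} immediately yields that $\sm 1\\p\esm$ is a Weierstrass point. The only value with $N\notin\{81,8q,16q\}$ for which the inequality still fails is $N=64$, and there I would argue separately: by \cite[Theorem 16]{B} the matrix $S_2=\sm 1&1/2\\0&1\esm$ lies in $N(\Gamma_0(64))$ and thus induces an automorphism carrying the $0$-cusp to $\sm 1\\2\esm$, so the known fact that the $0$-cusp is a Weierstrass point on $X_0(64)$ transfers to $\sm 1\\2\esm$.

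For the converse I would dispose of the three excluded families. When $N=81$ (so $p=3$, $M=9$), the $0$-cusp is not a Weierstrass point on $X_0(81)$, and the automorphism induced by $S_3=\sm 1&1/3\\0&1\esm$ from \cite[Theorem 16]{B}, which sends the $0$-cusp to $\sm 1\\3\esm$, transports this to show $\sm 1\\3\esm$ is not a Weierstrass point. When $N=8q$ or $N=16q$ (so $p=2$, $M=2q$ or $4q$), I would note via Lemma \ref{lem:cusp} that $\sm 1\\2\esm$ is $\Q$-rational, then write $N=q\cdot 8$ (resp.\ $q\cdot 16$) with the odd prime $q$ not dividing $8$ (resp.\ $16$) and $g_0(8)=g_0(16)=0$; Ogg's theorem \cite[Theorem]{O3} forbids any $\Q$-rational cusp from being a Weierstrass point, so $\sm 1\\2\esm$ is not one.

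The main obstacle I anticipate is the bookkeeping behind the forward direction: one must confirm that the failure locus of $g_0(p^2M)-pg_0(pM)\geq p$, among $N=p^2M$ with $p\mid M$ and $g_0(p^2M)\geq2$, is exactly $\{81,8q,16q,64\}$, which is where the precise comparison with \cite[Theorem 1]{A} and the isolated discrepancy at $N=64$ have to be pinned down through the genus formula of Theorem \ref{Gn}. Once this is secured, the three boundary cases are short, each reducing to one transfer step---an automorphism from \cite{B} for $N=64$ and $N=81$, and Ogg's theorem for $N=8q,16q$---applied to a cusp whose status is already known.
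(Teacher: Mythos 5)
Your proposal follows essentially the same route as the paper: Lemma \ref{irr-wp} together with Atkin's Theorem 1 \cite{A} for the generic case, the normalizer element $S_p$ from \cite{B} to transfer the known status of the $0$-cusp in the leftover cases $N=64$ and $N=81$, and Ogg's theorem \cite{O3} applied to the $\Q$-rational cusp $\sm 1\\2\esm$ for $N=8q,16q$. The argument is correct and matches the paper's proof in all essentials.
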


In the cases $p=2$ or $p=3$, there is more that we can find out even if $p\nmid M$.
As stated above, the $0$-cusp is a Weierstrass point if and only if so is $\sm 1\\p\esm$ for $p=2,3$.
If $p=2$, we note again that $\sm 1\\2\esm$ is a $\Q$-rational cusp; hence we can apply \cite[Theorem]{O3} for this case.
From \cite[Theorem 5]{LN} and \cite[Theorem]{O3}, we have the following:
\begin{thm}\label{main3}
Suppose $M$ is odd and $g_0(4M)\geq 2$. Then the irregular cusps equivalent to $\sm 1\\2\esm$ are not Weierstass points when $M=p, 3p$ with an odd prime $p\neq 3$, and they are Weierstrass points of $X_0(4M)$, except possibly for
\begin{equation*}
M=pq
\end{equation*}
where $p$ and $q$ are distinct odd primes, neither is 3, and $q\equiv -1\pmod{4}$.
\end{thm}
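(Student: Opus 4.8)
The plan is to reduce the statement to the behavior of the $\infty$-cusp and then feed it into the two cited results. By the discussion preceding the theorem, for $p=2$ the automorphism $S_2=\sm 1&\frac12\\ 0& 1\esm$ carries the $0$-cusp to $\sm 1\\2\esm$ and the Fricke involution $W_{4M}$ carries the $\infty$-cusp to the $0$-cusp; hence $\sm 1\\2\esm$ is a Weierstrass point of $X_0(4M)$ if and only if the $\infty$-cusp is. In addition, Lemma \ref{lem:cusp} guarantees that $\sm 1\\2\esm$ is defined over $\Q$. With these two observations in hand, the positive assertions will come from \cite[Theorem 5]{LN} (a sufficient condition for the $\infty$-cusp of $X_0(4M)$ to be a Weierstrass point) and the negative assertions from \cite[Theorem]{O3} (a sufficient condition for a $\Q$-rational cusp not to be one).

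First I would settle the non-Weierstrass families using Ogg's theorem \cite{O3}: a $\Q$-rational cusp of $X_0(rM')$ is not a Weierstrass point whenever $r$ is a prime, $r\nmid M'$, and $g_0(M')=0$. For $M=p$ with $p\neq 3$ an odd prime, factor $4M=4p=p\cdot 4$; since $p\nmid 4$ and $g_0(4)=0$, the $\Q$-rational cusp $\sm 1\\2\esm$ is not a Weierstrass point of $X_0(4p)$. For $M=3p$ with $p\neq 3$ an odd prime, factor $4M=12p=p\cdot 12$; since $p\nmid 12$ (here we use $p\neq 2,3$) and $g_0(12)=0$, the same conclusion holds on $X_0(12p)$. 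This establishes the two ``not Weierstrass'' clauses.

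For the remaining odd $M$ I would invoke \cite[Theorem 5]{LN} to show that the $\infty$-cusp of $X_0(4M)$ is a Weierstrass point, whence so is $\sm 1\\2\esm$. The core of the argument is a case analysis organized by the number of prime factors of $M$: after discarding $M=1$ and the families $M=p,3p$ already treated, I expect the prime powers $M=p^k$ and the integers with three or more prime factors to satisfy the sufficient condition of \cite[Theorem 5]{LN} as soon as $g_0(4M)\geq 2$, leaving only the two-prime case $M=pq$ to scrutinize. There the sufficient condition should continue to hold except in the stated configuration, namely $p,q$ distinct odd primes different from $3$ with $q\equiv-1\pmod{4}$, where it degenerates; the residue condition modulo $4$ enters through the arithmetic quantities (such as the count of square roots of $-1$) on which the genus bookkeeping behind \cite[Theorem 5]{LN} depends.

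The main obstacle is precisely this $M=pq$ analysis: matching the hypotheses of \cite[Theorem 5]{LN} against the divisor structure of $4pq$ and confirming that the sole obstruction to the sufficient condition is $q\equiv-1\pmod{4}$. Because that theorem is merely sufficient, and because one cannot write $4pq=rM'$ with $r$ prime, $r\nmid M'$, and $g_0(M')=0$ (so Ogg's theorem yields nothing here), this family escapes both tools and must be recorded as ``except possibly''. Along the way I would be careful to verify the $\Q$-rationality hypothesis of Ogg's theorem via Lemma \ref{lem:cusp} and to use the standing assumption $g_0(4M)\geq 2$ to eliminate the small sporadic values of $M$ (for instance $M=1,3$, where $g_0(4M)=0$) that would otherwise fall outside the dichotomy.
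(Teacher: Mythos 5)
Your proposal matches the paper's argument: the paper likewise reduces $\sm 1\\2\esm$ to the $0$-cusp via the normalizer element $S_2$, notes that $\sm 1\\2\esm$ is $\Q$-rational so Ogg's theorem \cite{O3} rules out the families $M=p,3p$ (via $4M=p\cdot 4$ and $p\cdot 12$ with $g_0(4)=g_0(12)=0$), and quotes \cite[Theorem 5]{LN} for the positive cases, with the $M=pq$, $q\equiv -1\pmod 4$ family left as ``except possibly'' exactly because it escapes both tools. The only difference is that you sketch the case analysis behind Lehner--Newman's sufficient condition, which the paper simply cites.
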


For $p=3$, $\sm 1\\3\esm$ is not a $\Q$-rational cusp, we cannot apply \cite[Theorem]{O3} for this case.
Thus by \cite[Theorem 6]{LN}, we have the following:

\begin{thm}\label{main4}
Suppose $g_0(9M)\geq 2$ and $3\nmid M$. Then the irregular cusps equivalent to $\sm 1\\3\esm$ are Weierstrass points of $X_0(9M)$, except possibly for
\begin{equation*}
M=p, pq
\end{equation*}
where $p$ and $q$ are distinct primes, and $q\equiv -1\pmod{3}$.
\end{thm}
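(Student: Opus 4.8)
The plan is to transfer the question about the irregular cusp $\sm 1\\3\esm$ to the well-studied $\infty$-cusp of $X_0(9M)$ and then quote Lehner and Newman's explicit criterion. By Theorem 16 of \cite{B} with $p=3$, the matrix $S_3=\sm 1&\frac{1}{3}\\ 0& 1\esm$ lies in $N(\Gamma_0(9M))$ and induces an automorphism of $X_0(9M)$; as noted before the statement, this automorphism carries the $0$-cusp to $\sm 1\\3\esm$. Hence $\sm 1\\3\esm$ is a Weierstrass point if and only if the $0$-cusp is. The Fricke involution $W_{9M}$ interchanges the $\infty$-cusp and the $0$-cusp, so the $0$-cusp is a Weierstrass point if and only if the $\infty$-cusp is. These two equivalences reduce the whole problem to deciding when the $\infty$-cusp of $X_0(9M)$ is a Weierstrass point.

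I would then invoke \cite[Theorem 6]{LN}, valid under the standing hypotheses $g_0(9M)\geq 2$ and $3\nmid M$, which gives a sufficient arithmetic condition on $M$ guaranteeing that the $\infty$-cusp of $X_0(9M)$ is a Weierstrass point: Lehner and Newman produce a suitable quotient of Dedekind eta functions whose only pole lies at $\infty$ and whose pole order does not exceed the genus $g_0(9M)$. Feeding these hypotheses into their condition, one checks that it holds for every admissible $M$ outside the list $M=p$ and $M=pq$ with $p,q$ distinct primes and $q\equiv -1\pmod 3$; for these values the eta-quotient fails to have small enough pole order and their criterion gives no information. Combining this with the equivalences of the previous paragraph yields the asserted conclusion, with these $M$ as the only possible exceptions.

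The reason the statement can only say ``except possibly''---in contrast to the case $p=2$ in Theorem \ref{main3}, where the excluded cusps were actually shown to be \emph{non}-Weierstrass---is that $\sm 1\\3\esm$ is not defined over $\Q$. By Lemma \ref{lem:cusp} only the cusps with $d=1,2$ are $\Q$-rational, so Ogg's non-Weierstrass criterion \cite{O3}, which requires a $\Q$-rational cusp, cannot be brought to bear to rule these cusps out. I expect this to be the essential obstruction: in the exceptional cases one has neither a positive eta-quotient construction nor an applicable negative result, and settling them would demand a genuinely different method. The remainder of the argument is the routine bookkeeping of pinning down exactly which $M$ escape Lehner and Newman's inequality, which I would not belabor here.
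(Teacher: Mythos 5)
Your proposal matches the paper's argument: the paper likewise transfers the question to the $0$-cusp via the automorphism induced by $S_3=\sm 1&\frac13\\0&1\esm$ (from Bars' description of $N(\Gamma_0(9M))$), invokes \cite[Theorem 6]{LN} for the $\infty$-/$0$-cusp, and notes that Ogg's non-Weierstrass criterion is unavailable because $\sm 1\\3\esm$ is not $\Q$-rational, which is exactly why the conclusion is only ``except possibly.'' This is essentially the same proof, correctly reconstructed.
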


\vspace{0.5cm}

\begin{center}
{\bf Acknowledgment}
\end{center}
The author thanks to Yifan Yang for letting him know the two functions $f$ and $g$ in \eqref{fg}. Also the author is grateful to Andreas Schweizer for his useful comments.

\vspace{0.5cm}

\end{document}